\numberwithin{equation}{section}
\newtheorem{thm}{Theorem}[section]
\newtheorem{prp}[thm]{Proposition}
\newtheorem{mthm*}[thm]{Main Theorem}
\newtheorem{que}[thm]{Question}
\newcommand{\mbar}{\overline{M}}
\newcommand{\cp}{\mathbb{CP}}
\newcommand{\ev}{\textnormal{ev}}
\newcommand{\gw}[3]{\textnormal{GW}^{#1}_{#2} \left( #3 \right)}
\newcommand{\X}{\mathfrak{X}}
\newcommand{\mbarfib}[1]{\mbar_{0,#1}^{\textnormal{Fib}}}
\renewcommand{\tocsection}[3]{%
\indentlabel{\@ifnotempty{#2}{\bfseries\ignorespaces#1 #2\quad}}\bfseries#3}
\renewcommand{\tocsubsection}[3]{%
  \indentlabel{\@ifnotempty{#2}{\ignorespaces#1 #2\quad}}#3}
\newcommand\@dotsep{4.5}
\def\@tocline#1#2#3#4#5#6#7{\relax
  \ifnum #1>\c@tocdepth 
  \else
    \par \addpenalty\@secpenalty\addvspace{#2}%
    \begingroup \hyphenpenalty\@M
    \@ifempty{#4}{%
      \@tempdima\csname r@tocindent\number#1\endcsname\relax
    }{%
      \@tempdima#4\relax
    }%
    \parindent\z@ \leftskip#3\relax \advance\leftskip\@tempdima\relax
    \rightskip\@pnumwidth plus1em \parfillskip-\@pnumwidth
    #5\leavevmode\hskip-\@tempdima{#6}\nobreak
    \leaders\hbox{$\m@th\mkern \@dotsep mu\hbox{.}\mkern \@dotsep mu$}\hfill
    \nobreak
    \hbox to\@pnumwidth{\@tocpagenum{\ifnum#1=1\bfseries\fi#7}}\par
    \nobreak
    \endgroup
  \fi}
\renewcommand\csname r@tocindent0\endcsname{0pt}
\def\l@subsection{\@tocline{2}{0pt}{2.5pc}{5pc}{}}
\begin{document}

\title[Counting rational curves with an $m$-fold point]{Counting rational curves with an $m$-fold point}

\author[I. Biswas]{Indranil Biswas}

\address{Department of Mathematics, Shiv Nadar University, NH91, Tehsil
Dadri, Greater Noida, Uttar Pradesh 201314, India}

\email{indranil.biswas@snu.edu.in, indranil29@gmail.com}

\author[C. Chaudhuri]{Chitrabhanu Chaudhuri}
\address{School of Mathematical Sciences, National Institute of Science Education and Research, HBNI, Bhubaneswar, Odisha- 752 050, India.}

\email{chitrabhanu@niser.ac.in }

\author[A. Choudhury]{Apratim Choudhury}
\address{School of Mathematical Sciences, National Institute of Science Education and Research, HBNI, Bhubaneswar, Odisha- 752 050, India.}

\email{apratim.choudhury@niser.ac.in}

\author[R. Mukherjee]{Ritwik Mukherjee}
\address{School of Mathematical Sciences, National Institute of Science Education and Research, HBNI, Bhubaneswar, Odisha- 752 050, India.}

\email{ritwikm@niser.ac.in}

\author[A. Paul]{Anantadulal Paul}
\address{International Center for Theoretical Sciences, Sivakote, Bangalore, 560089, India}

\email{anantadulal.paul@icts.res.in}

\subjclass[2010]{14N35, 14J45, 53D45}

\keywords{Enumerative Geometry, Singularity, Gromov-Witten Invariants, WDVV equation}

\date{}

\begin{abstract}
We obtain a recursive formula for the number of rational curves of degree $d$ in
$\mathbb{CP}^2$, that pass through $3d+1-m$ generic points and that have an $m$-fold
singular point. The special case of counting curves with a triple point was solved
earlier by other authors. 
We obtain the formula by considering a family version of Kontsevich's 
recursion formula, in contrast to the excess intersection theoretic approach of others.
A large number of low degree cases have been worked out explicitly.
\end{abstract}

\maketitle

\tableofcontents

\section{Introduction}

The enumerative geometry of rational curves in projective spaces is a rich subject with a long history. It 
has been studied by mathematicians for more than a century. Using the concept of moduli space of stable 
maps and Gromov-Witten Invariants, Kontsevich solved the following question: 
\begin{que}
\label{qu}
How many rational degree $d$ curves are there in $\mathbb{CP}^2$ that pass through $3d-1$ generic points? 
\end{que}
Kontsevich's solution to 
the above problem 
is simply ingenious. The idea is to look at 
$\overline{M}_{0,4}$, 
the moduli space of four marked points on a sphere. This space is isomorphic to $\mathbb{CP}^1$.
Hence, any two points determine the same divisor. The next step is to pull back the divisors 
on $\overline{M}_{0,4}(\mathbb{P}^2, d)$, 
the moduli space of stable maps with four marked points. 
After that, intersect the two divisors with appropriate cycles of complementary dimension, 
which gives an equality of numbers resulting in 
the famous Kontsevich's recursion formula (\cite{K.M}). 

Kontsevich's solution to question \ref{qu} is arguably one of the most remarkable achievements of the theory of 
moduli space of stable maps and Gromov-Witten invariants. Since then, mathematicians have been interested in the following 
question: 
\begin{que}
\label{qu_sing}
How many rational degree $d$ curves are there in $\mathbb{CP}^2$ that pass through the correct number of generic points 
and have a singularity of a certain type?
\end{que}

Question \ref{qu_sing} has been extensively studied when the singularity is a triple point.
Those who have studied this question, include among others 
Katz, Qin and Ruan (\cite{Katz-Triple}), Ravi Vakil (\cite{Vakil_arxiv}), 
Ziv Ran (\cite{Ran_Triple}, \cite{Ran}), Joachim Kock (\cite{Kock_Cusp})
and subsequently Aleksey Zinger (\cite{zin-triple}). 
The problem has been practically unapproachable for the general case of an $m$-fold singular point, when $m>3$. 

With this background, we now state the main result of our paper. 

\begin{mthm*}
\label{main_thm_statement}
Let $m$ be an integer greater than or equal to $3$. 
We have a recursive formula 
to compute the number of rational degree $d$ curves in $\mathbb{CP}^2$ 
passing through $3d+1-m$ generic points and that has an $m$-fold point.
The formula is given by equations \eqref{boundary} and \eqref{main_formula_bl} 
and the base case of the recursion is 
given by Proposition \ref{base_case_prpn}. 
\end{mthm*}
We have written a mathematica program
to implement the above formula. 
The program is available on our web page
\[ \textnormal{\url{https://www.sites.google.com/site/ritwik371/home}}. \]

Before we get into the technical details of how we solve the above problem, let us make a few remarks 
to put things in perspective. 
One of the most non-trivial parts of enumerative geometry 
is computing the degenerate contribution to the 
Euler class (also called excess intersection theory). 
While
computing the degenerate contribution to the Euler class is undoubtedly
a highly nontrivial task, it may be viewed as a bug rather than a feature. 
If the enumerative question can be recast in a suitable way, 
this issue can be avoided and excess intersection theory may be
bypassed.

In this paper, we show that the answer to Question \ref{qu_sing} can be obtained by 
replicating Kontsevich's idea. The novelty of our approach is to \textit{setup} the problem 
in such a way so that that the problem is drastically simplified. While earlier 
approaches to Question \ref{qu_sing} have succeeded when the singularity is a triple 
point, the approach in this paper directly solves the question for any $m$-fold point and
that too with substantially less efforts. This is the main achievement of this paper. 

\section{The intuitive idea behind our approach}

To keep this discussion simple, let us take $m\,=\,3$, i.e., the rational curves with 
a triple point are being counted. More precisely, we wish to compute the number of
rational degree $d$ 
curves, passing through $3d-2$ points and that have a triple point. Let us for a moment try 
to answer a different, but potentially easier question. How many rational degree $d$ curves 
are there through $3d-4$ points, that have a triple point at a given point?
This question can indeed be solved very easily by using no excess 
intersection theory. The desired number is simply the number of rational curves in 
$\mathbb{CP}^2$ blown up at a point, representing the class $dL-3E$ and that passes through 
$3d-4$ points. 
We will prove this fact
in Section \ref{correspondence}. 
It follows from a correspondence result in classical Algebraic Geometry 
which is studied by the author in \cite{Gath_blow_up}. 
For the moment 
we will accept this assertion.

Let us now return to the original problem, 
namely how to enumerate rational curves with a triple point somewhere (as opposed to the 
triple point confined to a fixed location). The idea is very simple: allow that triple 
point to vary in a family. More precisely, consider a fiber bundle over $\mathbb{CP}^2$, 
whose fiber over each point $q$ is the moduli space of stable maps into $\mathbb{CP}^2$ blown up 
at $q$, representing the class $dL-3E$ (where $L$ and $E$ denote the class of a line and 
the exceptional divisor respectively). The main point now is to execute intersection theory on 
this moduli space. But that can be achieved very easily. It is conceptually no more 
difficult than the original Kontsevich's recursion formula --- one simply needs to observe 
that $\overline{M}_{0,4}$ is isomorphic to ${\mathbb C}{\mathbb P}^1$ and hence any
two points define the same 
divisor. Pulling back those divisors and intersecting with appropriate cycles, produces us the 
desired recursion formula. That is it. Here we have carried out this approach; it may be
mentioned that the numbers in \cite{zin-triple} have been recovered. In fact, our 
approach directly gives the characteristic number of rational degree $d$ curves with an 
$m$-fold point; we simply have to count curves in the class $dL-mE$.

\section{A family version of Kontsevich's recursion formula}\label{family_kontsevich}

\subsection{The moduli space}
Let $X\,:= \,\mathbb{CP}^2 \times \mathbb{CP}^2.$ 
In order to distinguish each of the factors, the second copy of $\mathbb{CP}^2$ will be
denoted by $\mathbb{CP}^2_{\textnormal{Base}}$ (the reason for the notation will become 
clear in a moment). Hence, 
\[
X\,=\, \mathbb{CP}^2 \times \mathbb{CP}^2_{\textnormal{Base}}. 
\]
Define $\mathfrak{X}$ to be $X$ blown up along the diagonal. Notice that 
\[
\pi\,:\,\mathfrak{X} \,\longrightarrow \,\mathbb{CP}^2_{\textnormal{Base}}
\]
is a holomorphic fiber bundle over $\mathbb{CP}^2_{\textnormal{Base}}$, whose fiber over each 
point $q\,\in\, \mathbb{CP}^2_{\textnormal{Base}}$ is the blow up $\mathbb{CP}^2_q$ of
$\mathbb{CP}^2$ at the point $q$.

The pull-back, to $\mathfrak{X}$, of hyperplane classes of $\mathbb{CP}^2$ and
$\mathbb{CP}^2_{\textnormal{Base}}$ will be denoted by $L$ and $a$ 
respectively. Furthermore, the exceptional divisor in $\mathfrak{X}$ will be
denoted by $E$. Therefore, the cohomology ring of $\mathfrak{X}$ is 
generated by the classes $L$, $a$ and $E$, i.e., 
\[
H^*(\mathfrak{X},\, {\mathbb Z}) \,=\, \langle L,\, a,\, E \rangle.
\]

Given an $\alpha = (d,m) \,\in\, \mathbb{Z}^2$, such that $\alpha \neq (0,0)$, 
define the fiber bundle 
\begin{equation}\label{e1}
\pi_F\,:\,\overline{M}_{0,r}^{\textnormal{Fib}}(\mathfrak{X}, \alpha)
\,\longrightarrow \,\mathbb{CP}^2_{\textnormal{Base}}
\end{equation}
as follows: the fiber over each point $q$ is the moduli space of genus zero, $r$ pointed, 
stable maps into $\mathbb{CP}^2_q$ representing the class 
\[\beta_q\,:= \,dL_q-mE_q,\] 
where $L_q$ and $E_q$ denote the class of a line and exceptional divisor in 
$\mathbb{CP}^2_q$ respectively. In other words, the fiber is 
\[
\pi_{F}^{-1}(q)\,=\, \overline{M}_{0,r}(\mathbb{CP}^2_q, \,\beta_q).
\]
Using the fact that the dimension of the total space of a fiber bundle is the dimension of
the fiber plus the dimension of the base, we conclude that the 
dimension of the moduli space $\mbarfib{r}(\X,\alpha)$ is given by 
\begin{equation*}
\dim \mbarfib{r}(\X,\alpha)\, =\, \dim \mbar_{0,r}(\cp^2_q, \beta_q) + \dim \cp^2
\,=\, (3d-1-m)+2 \,=\, 3d+1-m.
\end{equation*}

As customary, the moduli space $\overline{M}_{0,r}^{\textnormal{Fib}}
(\mathfrak{X},\,\alpha)$ admits $r$ natural evaluation maps 
into $\mathfrak{X}$. 

\subsection{Recursive formula}

Let $a \,\in\, H^2(\cp^2_{\text{Base}})$ be the hyperplane class. Define 
\begin{align}
N_{\alpha}(r, \,\theta)&\,:=\, \big[\overline{M}_{0,r}^{\textnormal{Fib}}(\mathfrak{X},
\,\alpha)\big] \cdot \textnormal{ev}_1^*(L^2) \ldots 
\textnormal{ev}_r^*(L^2)\cdot \pi_F^*(a^{\theta}). \label{N_alpha_defn}
\end{align}
We formally declare $N_{\alpha}(r, \theta)$ to be zero, unless the right hand side of \eqref{N_alpha_defn} dimensionally makes 
sense. In other words, 
\begin{align}
N_{\alpha}(r, \,\theta) & \,= \,0 \qquad \textnormal{ if} \qquad 3d+1-m
\,\neq\, r+\theta. \label{dim_condition}
\end{align}
This is because $\dim \mbarfib{r}(\X,\alpha) = 3d+1-m$, and codimension of the cycle 
$\ev_1^*(L^2) \cdots \ev_r^*(L^2) \cdot \pi_F^*(a^\theta)$ is $2r+\theta$. Hence, 
$N_{\alpha}(r,\, \theta)$ should be declared to be zero, unless $3d+1-m+r \,=\, 2r+\theta$.

We claim that for 
\begin{align}
\alpha&\,=\, (d,\,m), \qquad r\,=\,3d+1-m\qquad \textnormal{and} \qquad
\theta\,=\,0, \label{alpha_value}
\end{align}
$N_{\alpha}(r,\, \theta)$ in \eqref{N_alpha_defn}
is the number of rational curves of degree $d$ in $\mathbb{CP}^2$, passing through $3d+1-m$ generic points and having an $m$ fold point singularity. 
This claim will be proved in Section \ref{correspondence}.

To state the recursive formula for $N_{\alpha}(r, \theta)$, given 
$\alpha_1,\, \alpha_2 \,\in\, \mathbb{Z}^2$ and $r_1,\, r_2,\, \theta \,\in \,\mathbb{Z}_{\geq 0}$
we introduce the quantities 
\begin{align} \label{boundary}
\mathcal{B}_{\alpha_1, \alpha_2}(r_1,\, r_2,\,\theta) := \ &
N_{\alpha_1}(r_1,\, \theta+2) N_{\alpha_2}(r_2,\, 0)\nonumber \\
 & + N_{\alpha_1}(r_1,\, \theta+1) N_{\alpha_2}(r_2,\, 1) 
+ N_{\alpha_1}(r_1,\, \theta) N_{\alpha_2}(r_2,\, 2). 
\end{align}

For the ease of notation define
\begin{align*}
l\,:=\, (1,\,0) \,\in \,\mathbb{Z}^2 \qquad \textnormal{and} \qquad e
\,:=\, (0,\,-1) \,\in\, \mathbb{Z}^2. 
\end{align*}
Also consider the following product in $\mathbb{Z}^2$: 
\[
(d_1,\,m_1)\cdot(d_2,\,m_2) \,=\, d_1 d_2 - m_1 m_2.
\]

\begin{thm} \label{rf_main}
For $\alpha \,\in \,\mathbb{Z}^2 \setminus \{(0,\,0)\}$ and $r,\,\theta \,\in \,\mathbb{Z}_{\geq 0}$,
the following recursive formula holds:
\begin{align}
N_{\alpha}(r, \,\theta) \,=\, \sum_{\substack{r_1+r_2 = r-3, \\ \alpha_1+\alpha_2 = \alpha}}
\binom{r-3}{r_1} (\alpha_1 \cdot \alpha_2) 
(\alpha_1\cdot l) 
& \Big( \mathcal{B}_{\alpha_1, \alpha_2}(r_1+1,\, r_2+1,\, \theta) (\alpha_2 \cdot l) \nonumber \\ 
& - \mathcal{B}_{\alpha_1, \alpha_2}(r_1,\, r_2+2,\, \theta)(\alpha_1 \cdot l) \Big). \label{main_formula_bl}
\end{align}
\end{thm}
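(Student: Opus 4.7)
The plan is to adapt Kontsevich's original WDVV-style recursion argument to the fibered moduli space $\mbarfib{*}(\X,\alpha)$. Introduce four auxiliary marked points to form $\mbarfib{r+4}(\X,\alpha)$ and consider the forgetful morphism $\pi_{1234}\colon \mbarfib{r+4}(\X,\alpha)\longrightarrow\mbar_{0,4}\cong\cp^1$ that remembers only these four. Since $\mbar_{0,4}\cong\cp^1$ is one-dimensional, the boundary divisors $D_{12|34}$ and $D_{13|24}$ are linearly equivalent, yielding the divisor equality $\pi_{1234}^*[D_{12|34}]=\pi_{1234}^*[D_{13|24}]$ on $\mbarfib{r+4}(\X,\alpha)$. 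The recursion is obtained by intersecting this equality against a judiciously chosen top-codimension cycle built from $\prod_{i=5}^{r+4}\ev_i^*(L^2)$ on the $r$ original marked points, a small combination of $\ev^*(L)$ and $\ev^*(L^2)$ insertions on the four auxiliary points (chosen so that the total codimension matches $\dim\mbarfib{r+4}(\X,\alpha)-1$), and the base pullback $\pi_F^*(a^\theta)$.

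The standard boundary analysis expresses $\pi_{1234}^*[D_{ij|kl}]$ as a sum over splittings $\alpha=\alpha_1+\alpha_2$ and partitions of the free marked points into two subsets of sizes $r_1$ and $r_2$ with $r_1+r_2=r-3$ (the source of the $\binom{r-3}{r_1}$ factor in \eqref{main_formula_bl}). Each summand is isomorphic to the fiber product
\[
\mbarfib{r_1+3}(\X,\alpha_1)\;\times_{\X}\;\mbarfib{r_2+3}(\X,\alpha_2)
\]
along the node-evaluation maps, and integration of the cycle against this fiber product reduces, via K\"unneth, to a product of integrals over the individual moduli spaces glued by two diagonal-class decompositions. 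The base diagonal
\[
[\Delta_{\cp^2_{\textnormal{Base}}}]\,=\,1\otimes a^2+a\otimes a+a^2\otimes 1
\]
distributes the $a^\theta$ insertion between the two sides in three ways $(\theta+2,0)$, $(\theta+1,1)$, $(\theta,2)$, producing exactly the three-term sum defining $\mathcal{B}_{\alpha_1,\alpha_2}(r_1,r_2,\theta)$. The fiberwise diagonal of the blown-up plane $\cp^2_q$, in the basis $(1,L_q,E_q,L_q^2)$,
\[
[\Delta_{\cp^2_q}]\,=\,1\otimes L_q^2+L_q\otimes L_q-E_q\otimes E_q+L_q^2\otimes 1
\]
(with the minus sign from $E_q^2=-1$), contributes, via its middle two summands, the bilinear pairing $d_1d_2-m_1m_2=\alpha_1\cdot\alpha_2$ upon integration against the fiberwise curve classes $\beta_i=d_iL_q-m_iE_q$ (using $\int_{\beta_i}L_q=d_i$ and $\int_{\beta_i}E_q=m_i$).

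The $(\alpha_i\cdot l)=d_i$ factors come from the auxiliary $\ev^*(L)$ insertions: the persistent $(\alpha_1\cdot l)$ outside the bracket is supplied by a line insertion that lies on the $\alpha_1$-component in both of the partitions $12|34$ and $13|24$, while the factors $(\alpha_2\cdot l)$ versus $(\alpha_1\cdot l)$ inside the bracket track a second line insertion that sits on the $\alpha_2$-component in one partition but jumps to the $\alpha_1$-component in the other; the minus sign between the two bracketed terms is precisely the signed subtraction $\pi_{1234}^*[D_{12|34}]-\pi_{1234}^*[D_{13|24}]=0$ that drives the argument, with $N_\alpha(r,\theta)$ itself emerging on the left-hand side after isolating the distinguished contributions in which one of the two splitting components collapses. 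The main technical obstacles will be (i) to verify that the extremal summands $1\otimes L_q^2$ and $L_q^2\otimes 1$ of the fiber diagonal either vanish by dimension on the individual factor or cancel between the two pullback divisors, so that only the pairing $\alpha_1\cdot\alpha_2$ remains visible in the coefficient, and (ii) to confirm the combinatorial and sign data with which $N_\alpha(r,\theta)$ is recovered on the left-hand side. Once these points are settled, the remaining bookkeeping mirrors Kontsevich's original argument verbatim.
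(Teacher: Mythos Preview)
Your approach is exactly the paper's: pull back the linear equivalence $D(12|34)\sim D(13|24)$ from $\overline{M}_{0,4}$ to the fibered moduli space, cap with a cycle built from $L$, $L^2$ and $a^\theta$ insertions, and read off the recursion from the boundary. Two concrete points need fixing.

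\textbf{Indexing.} The paper works on $\mbarfib{r+1}(\X,\alpha)$, not $\mbarfib{r+4}(\X,\alpha)$: the first four marked points carry the insertions $\ev_1^*(L),\ev_2^*(L),\ev_3^*(L^2),\ev_4^*(L^2)$ and the remaining $r-3$ points $x_5,\dots,x_{r+1}$ carry $\ev_i^*(L^2)$. That is why $r_1+r_2=r-3$. With your $r+4$ marked points and $r$ free $L^2$-insertions, the dimension count forces $r+\theta=3d-2-m$, which is off by $3$, and the partition would read $r_1+r_2=r$; your formula could not then reproduce \eqref{main_formula_bl}.

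\textbf{The extremal diagonal terms and the appearance of $N_\alpha(r,\theta)$.} Your obstacle (i) and obstacle (ii) are the same issue, and the paper resolves it geometrically rather than via K\"unneth. The paper does not split the fiber diagonal of $\cp^2_q$ at all: it simply observes that for $\alpha_1,\alpha_2\neq 0$ the number of nodal gluings of a generic $\alpha_1$-curve to a generic $\alpha_2$-curve equals their intersection number $\alpha_1\cdot\alpha_2$, and that is the sole source of that factor. The term $N_\alpha(r,\theta)$ then arises not from any extremal $1\otimes L_q^2$ summand but from the boundary stratum of $\pi^*D(12|34)$ with $\alpha_1=0$: the component carrying $x_1,x_2$ is contracted, the two $L$-insertions intersect to a single generic point, and the surviving $\alpha$-curve passes through $(r-3)+2+1=r$ points. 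On the $D(13|24)$ side the corresponding degenerate strata do not contribute because a generic line and a generic point do not meet. If you insist on the K\"unneth route, the extremal diagonal terms $1\otimes L_q^2$ and $L_q^2\otimes 1$ will not ``cancel'': they are precisely what encodes these degenerate ($\alpha_i=0$) strata, so you should expect them to produce $N_\alpha(r,\theta)$ on one side and nothing on the other, rather than to vanish or pair off.
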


To compute $N_{\alpha}(r,\, \theta)$ using the recursion formula in Theorem \ref{rf_main} we have to
know the base cases. For this, denote by
\begin{equation}\label{e2}
n_{\alpha}^{\mathbb{CP}^2_1}(r)
\end{equation}
the number of rational curves in $\mathbb{CP}^2$ blown up at one point, representing the 
class $dL-mE$ and passing through $r$ generic points. As before, unless $r\,=\,3d-1-m$, we formally 
define this number to be zero. 
These numbers can be computed from the formula given in \cite{Rahul_Gottsche}.
We are now ready to state all the necessary base cases of the recursive formula \eqref{rf_main}; this will
be stated as the following proposition. 

\begin{prp}
\label{base_case_prpn}
Let $N_{\alpha}(r,\theta)$ be defined by equation \ref{N_alpha_defn}. 
Then the following are true:
\begin{subnumcases}{N_{\alpha}(r,\, \theta)\,=\,} 
 0 & \textnormal{if} ~~$r\,\neq\, 3d+1-m-\theta$, \label{base_case_rec1_pt5} \\
\textnormal{$n_{\alpha}^{\mathbb{CP}^2_1}(r)$} & \textnormal{if} ~~$\theta\,=\,2$, \label{base_case_rec1_pt1} \\ 
 0 & \textnormal{if} ~~$\theta\,\geq\, 3$, \label{base_case_rec1_pt4} \\
 0 & \textnormal{if} ~~$d\,<\,0$, \label{base_case_rec1_pt2}\\
 0 & \textnormal{if} ~~$m\,<\,0$, ~~\textnormal{unless} ~~$d\,=\,0$, $m\,=\,
-1$, ~~\textnormal{and} ~~$\theta\,=\,2$, \label{base_case_rec1_pt3} \\
 0 & \textnormal{if} ~~$d\,\geq\, 2$ ~~\textnormal{and}~~ $m\,\geq\, d$. \label{base_case_rec1_pt6}
\end{subnumcases} 
Furthermore, 
\begin{align}
N_{l}(4,\, 0) & \,=\, 0, \label{base_case_rec2_pt3} \\
N_{l}(3,\,1) & \,= \,0,    \label{base_case_rec2_pt4} \\    
N_{l-e}(3,\,0) & \,= \,0,  \label{base_case_rec2_pt5} \\  
N_{l-e}(2,\,1) & \,=\, 1,  \label{base_case_rec2_pt6} \\  
N_{l-me} (r, \,\theta)  & \,=\, 0, \qquad \forall\,\, ~~m \,\geq\, 2. \label{base_case_rec2_pt7} 
\end{align}
\end{prp}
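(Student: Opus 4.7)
I would dispose of the base cases in three groups: structural vanishings from the geometry of $\X$, an effective-cycle analysis to rule out bad classes, and direct low-degree computations.

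First, the structural items are essentially immediate. Equation \eqref{base_case_rec1_pt5} is a restatement of the dimension condition \eqref{dim_condition}. Equation \eqref{base_case_rec1_pt4} holds because $\cp^2_{\textnormal{Base}}$ has complex dimension two, so $a^\theta = 0$ for $\theta \geq 3$ and the integrand vanishes. Equation \eqref{base_case_rec1_pt2} is immediate since no stable map into any $\cp^2_q$ can represent a class of negative degree on $\cp^2$. For \eqref{base_case_rec1_pt1}, when $\theta = 2$ the class $a^2$ is Poincar\'e-dual to a point of $\cp^2_{\textnormal{Base}}$, and cutting with $\pi_F^*(a^2)$ restricts the integration to the single fiber $\pi_F^{-1}(q_0) = \mbar_{0,r}(\cp^2_{q_0}, \beta_{q_0})$ over a generic $q_0$; on this fiber, $L$ restricts to the hyperplane class of $\cp^2_{q_0}$, so the integral equals $n_\alpha^{\cp^2_1}(r)$ by definition.

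Next, \eqref{base_case_rec1_pt3} and \eqref{base_case_rec1_pt6} follow from decomposing the image cycle. For a stable map $f\colon C \to \cp^2_q$ with $f_*[C] = dL_q - mE_q$, write $f_*[C] = \sum w_i [C_i]$ as a positive combination of irreducible curves. Each $C_i$ is the exceptional divisor $E_q$ (with $(d_i,m_i) = (0,-1)$), a line through $q$ ($(1,1)$), a line not through $q$ ($(1,0)$), or a higher-degree irreducible curve $d_i L_q - m_i E_q$ with $m_i \leq d_i - 1$, yielding $\sum w_i(d_i - m_i) \geq 0$. For \eqref{base_case_rec1_pt3}, $m < 0$ forces at least one $E_q$-component; since any marked point on such a component maps via the first-factor projection $\X \to \cp^2$ to $q$ itself, it cannot meet the generic condition imposed by $\ev_i^*(L^2)$, and a short case analysis leaves only $(d,m,\theta) = (0,-1,2)$ with $r = 0$, for which \eqref{base_case_rec1_pt1} gives $N = n_{(0,-1)}^{\cp^2_1}(0) = 1$ (the unique representative being the inclusion of $E_q$). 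For \eqref{base_case_rec1_pt6}, $m > d$ violates the inequality, while $m = d$ forces every component to be a line through $q$, so the image in $\cp^2$ is a union of at most $d$ lines through $q$; the dimension constraint $r = 2d + 1 - \theta$ with $\theta \leq 2$ gives $r \geq 2d - 1 > d$ for $d \geq 2$, so $r$ generic point conditions cannot all be realized. This last step is the most delicate point of the proof: unlike $m > d$ where the moduli is empty, here the moduli is positive-dimensional and the vanishing requires a careful incidence count comparing the number of marked points to the number of distinct image components.

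Finally, the explicit identities \eqref{base_case_rec2_pt3}--\eqref{base_case_rec2_pt7} follow by direct analysis. For \eqref{base_case_rec2_pt3} and \eqref{base_case_rec2_pt4}, no line in $\cp^2$ passes through four (respectively three) generic points. For \eqref{base_case_rec2_pt5}, no line through a variable $q$ can contain three generic points of $\cp^2$. For \eqref{base_case_rec2_pt6}, the unique solution is the line $\overline{p_1 p_2}$ together with $q$ at the single intersection of $\overline{p_1 p_2}$ with the divisor representing $\pi_F^*(a)$ in $\cp^2_{\textnormal{Base}}$, giving $N = 1$. Equation \eqref{base_case_rec2_pt7} follows from the component bound above: under $\sum w_i d_i = 1$, the maximum of $\sum w_i m_i$ is $1$, so no stable map represents $L_q - m E_q$ with $m \geq 2$.
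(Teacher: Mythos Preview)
Your argument is correct and matches the paper on the structural items \eqref{base_case_rec1_pt5}, \eqref{base_case_rec1_pt1}, \eqref{base_case_rec1_pt4}, \eqref{base_case_rec1_pt2} and on the explicit low-degree identities \eqref{base_case_rec2_pt3}--\eqref{base_case_rec2_pt6}. The genuine divergence is in \eqref{base_case_rec1_pt6} and \eqref{base_case_rec2_pt7}. The paper handles these via the genus formula---a rational plane curve of degree $d$ with an ordinary $m$-fold point has geometric genus $\binom{d-1}{2}-\binom{m}{2}-\epsilon\geq 0$, forcing $m\leq d-1$---and then invokes the correspondence theorem of Section~\ref{correspondence} to transport this back to the blow-up. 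Your route through the effective cone of $\cp^2_q$ is different: you classify the irreducible curve classes that can appear in a stable-map image and read off $d-m=\sum w_i(d_i-m_i)\geq 0$ directly. This is more elementary and, importantly, self-contained: it does not forward-reference Section~\ref{correspondence}, so Proposition~\ref{base_case_prpn} becomes logically independent of that section.

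One sharpening is worth recording for the $m=d$ subcase of \eqref{base_case_rec1_pt6}, which you flag as the most delicate step. You write that the image is ``a union of at most $d$ lines through $q$'' and then run a pigeonhole incidence argument. In fact the situation is cleaner: since $(L_q-E_q)^2=0$, the strict transforms of two \emph{distinct} lines through $q$ are disjoint in $\cp^2_q$, so a \emph{connected} stable map in class $d(L_q-E_q)$ is necessarily a $d$-fold cover of a \emph{single} line through $q$. Then the vanishing is immediate, because a single line cannot pass through $r=2d+1-\theta\geq 3$ generic points when $d\geq 2$. This removes the need for the more delicate pigeonhole-and-constraint count you sketch. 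The same observation also cleans up \eqref{base_case_rec2_pt7}: for $d=1$ your bound gives $m\leq 1$, so the class $L_q-mE_q$ with $m\geq 2$ is simply not effective and the moduli space is empty.
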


We note that Theorem \ref{rf_main} combined with Proposition \ref{base_case_prpn}, 
enables us to compute $N_{\alpha}(r,\, \theta)$ for all $\alpha$, $r$ and $\theta$. The rest 
of this section is devoted to the proofs of these results.

\subsection{Proofs of the recursion and initial cases}

\begin{proof}[Proof of Theorem \ref{rf_main}]
As in Kontsevich's recursion formula, we consider the forgetful morphism
\begin{align*}
\pi\,:\,\overline{M}_{0,r+1}^{\textnormal{Fib}}(\mathfrak{X},\, \alpha)
\,\longrightarrow\, \overline{M}_{0,4}.
\end{align*}
The moduli space $\mbar_{0,4}$ parametrizes stable genus zero curves with four marked points.
In $\mbar_{0,4}$ we have the boundary divisors $D(12|34)$,\, $D(13|24)$ and $D(14|23)$.
Define the cycle $\mathcal{Z} \in H^{4r+2\theta}\Big(\mbarfib{r+1}(\X, \alpha)\Big)$ as 
\begin{align*}
\mathcal{Z}&\,:=\, 
\Big(\textnormal{ev}_1^{*}(L) \cdot \textnormal{ev}_2^{*}(L)) 
\cdot \textnormal{ev}_3^{*}(L^2) \cdot \textnormal{ev}_4^{*}(L^2)\Big) \cdot \Big(\textnormal{ev}_5^{*}(L^2) 
\ldots \textnormal{ev}_{r+1}^*(L^2) \Big) \cdot \pi_F^*a^{\theta}.
\end{align*}

We claim that 
\begin{align}
[\pi^*D(12|34)]\cdot \mathcal{Z} & \,=\, N_{\alpha}(r,\,\theta) +
\sum_{\substack{r_1+r_2 \,=\, r-3, \\ 
\alpha_1+\alpha_2 = \alpha}} \binom{r-3}{r_1}
(\alpha_1\cdot l)^2 \Big(\mathcal{B}_{\alpha_1, \alpha_2}(r_1,\, r_2+2,\, \theta)
(\alpha_1 \cdot \alpha_2) \Big)  \label{1234}
\end{align}
and
\begin{align} 
[\pi^*D(13|24)]\cdot \mathcal{Z} & \,=\, \sum_{\substack{r_1+r_2 = r-3, \\ 
\alpha_1+\alpha_2\,=\, \alpha}} \binom{r-3}{r_1} 
(\alpha_1\cdot l) \cdot (\alpha_2 \cdot l)
\Big( \mathcal{B}_{\alpha_1, \alpha_2}(r_1+1,\, r_2+1,\, \theta) (\alpha_1 \cdot \alpha_2)\Big). \label{1324}
\end{align}

We will justify both the assertions shortly. First observe that 
\begin{align}
[\pi^*D(12|34)]\cdot \mathcal{Z} & \,\,= \,\,[\pi^*D(13|24)]\cdot \mathcal{Z}. \label{1234_is_eq_to_1324}
\end{align}
From \eqref{1234_is_eq_to_1324}, \eqref{1234} and \eqref{1324} we get the required expression for
$N_{\alpha}(r,\theta)$.

Claims \eqref{1234} and \eqref{1324} will now be proved.

First, let us make a couple of 
definitions. For $\alpha_i \,=\, (d_i,\, m_i) \,\in\, \mathbb{Z}^2$, a stable map 
$u \,\in\, \mbarfib{r+1}(\X,\,\alpha)$ to be called of type $(\alpha_1, \,\alpha_2)$ if the following holds:
\begin{itemize}
\item The domain has two components $C_1$ and $C_2$ meeting at a node. Each component
is isomorphic to $\cp^1$ and we denote $u_i \,=\, u |_{C_i}$.

\item The image of both $u_1$ and $u_2$ lie inside $\mathbb{CP}^2_q$ for some $q$. 

\item The map $u_1$ represents the class $d_1 L_q - m_1 E_q$ and the map
$u_2$ represents the class $d_2 L_q - m_2 E_q$.
\end{itemize}

Next, we call a stable map of type $(\alpha_1, \,\alpha_2)$ 
to be decorated with $(r_1,\, r_2)$ marked points if there are $r_1$ marked points on the 
first curve and $r_2$ marked points on the 
second curve. Pictorially, this can be represented as follows: 
\begin{figure}[H]
\vspace*{0.2cm}
\begin{center}\includegraphics[scale=1]{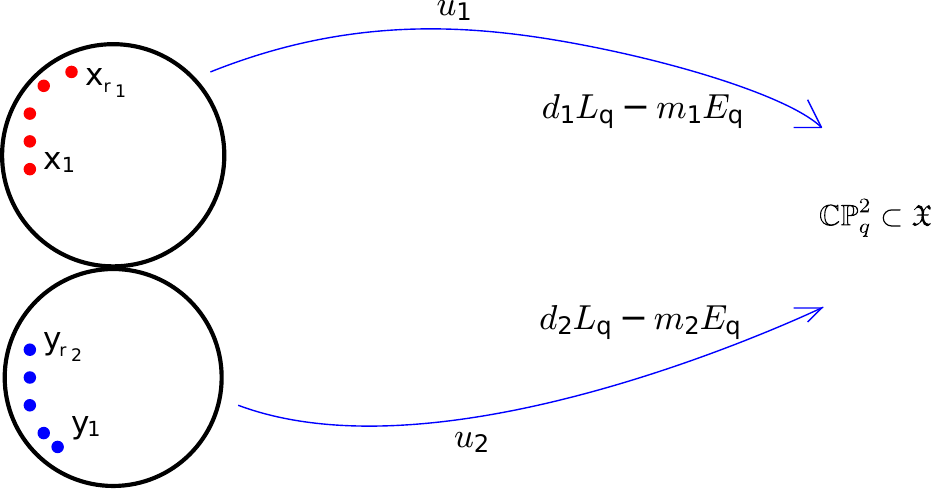}\vspace*{-0.2cm}\end{center}
\end{figure}

Let us now explain the geometric significance of the number defined by
$\mathcal{B}_{\alpha_1, \alpha_2}(r_1,\, r_2,\,\theta)$, 
namely the right hand side of \eqref{boundary}.
Consider the product of the bundles
\begin{align}\label{e6}
\Pi\,=\, (\pi_1,\, \pi_2)\,:\,\overline{M}_{0,r_1}^{\textnormal{Fib}}(\mathfrak{X},\, \alpha_1)
\times \overline{M}_{0,r_2}^{\textnormal{Fib}}(\mathfrak{X},\, \alpha_2)
&\,\longrightarrow\, \mathbb{CP}^2_{\textnormal{Base}_1} \times \mathbb{CP}^2_{\textnormal{Base}_2}
\end{align}
defined in \eqref{e1}. If we take two stable maps $u_i \,\in\, \mbarfib{r_i}(\X,\alpha_i)$, $i\,=\,1,\,
2$, their images might not be in the same fiber. So we have to look at the pull back of the diagonal
of $\mathbb{CP}^2_{\textnormal{Base}_1} \times \mathbb{CP}^2_{\textnormal{Base}_2}$ by
$\Pi$. Hence the cycle 
\begin{equation}\label{e5}
\Pi^*(\Delta_{\mathbb{CP}^2_{\textnormal{Base}_1} \times \mathbb{CP}^2_{\textnormal{Base}_2}}) 
\,\, \subset\,\, \overline{M}_{0,r_1}^{\textnormal{Fib}}(\mathfrak{X},\, \alpha_1)
\times \overline{M}_{0,r_2}^{\textnormal{Fib}}(\mathfrak{X},\, \alpha_2)
\end{equation}
represents the closure of the stable maps of type $(\alpha_1, \,\alpha_2)$ decorated with 
$(r_1, \,r_2)$ points. We now intersect this cycle in \eqref{e5} with 
\begin{align*}
\pi_1^*\Big(\textnormal{ev}_1^*(L^2) \ldots 
\textnormal{ev}_{r_1}^*(L^2)\cdot \pi_F^*(a^{\theta})\Big) \cdot 
\pi_2^*\Big(\textnormal{ev}_1^*(L^2) \ldots 
\textnormal{ev}_{r_2}^*(L^2)\Big),
\end{align*}
where $\pi_1$ and $\pi_2$ are the projection maps in \eqref{e6}. Note that this intersection will 
precisely give the right hand side of equation \eqref{boundary}.
Geometrically, this intersection gives the number of stable maps of type $(\alpha_1, \,\alpha_2)$ 
decorated with $(r_1,\, r_2)$ points such that the following conditions hold:
\begin{itemize}
\item If the image of the stable map lies in $\mathbb{CP}^2_q$, then 
the $r_1$ marked points on the first curve intersect $r_1$ generic points in $\mathbb{CP}^2_q$.
This is because of the presence of the factor of $\pi_1^*\Big(\textnormal{ev}_1^*(L^2) \ldots 
\textnormal{ev}_{r_1}^*(L^2)\Big)$ while computing the intersection number.

\item Similarly, if the image of the stable map lies in $\mathbb{CP}^2_q$, 
then the $r_2$ marked points on the second curve intersect $r_2$ generic points in $\mathbb{CP}^2_q$. 
This is because of the presence of the factor of $\pi_2^*\Big(\textnormal{ev}_1^*(L^2) \ldots 
\textnormal{ev}_{r_2}^*(L^2)\Big)$ while computing the intersection number.

\item Finally, because of the presence of the factor of $\pi_F^*(a^{\theta})$ while computing the 
intersection number, there is a possible restriction on the $\mathbb{CP}^2_q$ where the image of the 
stable map is allowed to lie. If $\theta\,=\,0$, then there is no restriction. If $\theta\,=\,2$, then the 
image of the stable map can only lie in a fixed $\mathbb{CP}^2_q$. The point $q$ will be representing the 
class $a^2$. If $\theta\,=\,1$, then the image of the stable map can only lie in a
$\mathbb{CP}^2_q$, where 
$q$ lies on a generic line. In this case, the line will be representing the class $a$.
\end{itemize}

We are now ready to justify equation \eqref{1234}. 
The left hand side of equation \eqref{1234} can be represented by the following picture:
\begin{figure}[H]
\vspace*{0.2cm}
\begin{center}\includegraphics[scale = .5]{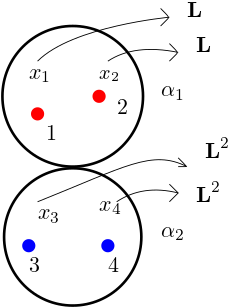}\vspace*{-0.2cm}\end{center}
\end{figure}
Intersect the above space with $\mathcal{Z}$. First assume $\alpha_1$ and $\alpha_2$ are
both nonzero. Consider the marked points 
\[x_5,\,\, x_6,\,\, \cdots,\,\, x_{r+1}.\] 
Notice that there are $r-3$ of these marked points in $\mathcal{Z}$ (here we are not counting 
the first four marked points, namely $x_1,\, x_2,\, x_3$ and $x_4$). Suppose $r_1$ of 
these marked points are on the $\alpha_1$ component and $r_2\,= \,r-3-r_1$ of these marked 
points on the $\alpha_2$ component. When we intersect that configuration with 
$\mathcal{Z}$, we are computing the cardinality of the set $S$ multiplied by 
$(\alpha_1 \cdot \alpha_2) (\alpha_1\cdot l)^2$, where $S$ is the following set: 
\begin{itemize}
 \item Stable maps of type 
$(\alpha_1, \,\alpha_2)$ decorated with $(r_1,\, r_2+2)$ marked points. This is because of the presence of 
$\textnormal{ev}_3^{*}(L^2) \cdot \textnormal{ev}_4^{*}(L^2)$ in the definition of $\mathcal{Z}$.

\item The image of the stable map lies inside $\mathbb{CP}^2_q$, where $q$ lies in a generic cycle representing $a^{\theta}$.
\end{itemize}
The reason we multiply by $(\alpha_1 \cdot \alpha_2)$ is because there are $(\alpha_1 \cdot \alpha_2)$ many choices for the nodal point of the 
stable map, each such map is counted separately in the moduli space of curves. Finally, we multiply by 
$(\alpha_1\cdot l)^2$ due to the presence of 
$\textnormal{ev}_1^{*}(L) \cdot \textnormal{ev}_2^{*}(L)$ in the definition of $\mathcal{Z}$. 
We note that the cardinality of $S$ is given by 
\begin{align}
\mathcal{B}_{\alpha_1, \alpha_2}(r_1,\, r_2+2,\, \theta). \label{bubble_1234_cardinality_temp} 
\end{align}
Hence, the cardinality of $S$ multiplied by 
$(\alpha_1 \cdot \alpha_2) (\alpha_1\cdot l)^2$ 
is given by 
\begin{align}
\mathcal{B}_{\alpha_1, \alpha_2}(r_1,\, r_2+2,\, \theta) (\alpha_1 \cdot \alpha_2) (\alpha_1\cdot l)^2. \label{bubble_1234_cardinality} 
\end{align}
We note that $\binom{r-3}{r_1}$ times the expression in equation \eqref{bubble_1234_cardinality} is precisely equal to the summand of the 
second term in equation \eqref{1234}. 

Next, suppose that $\alpha_1\,=\,0$ and $\alpha_2\,= \,\alpha$. This means
that the $\alpha_1$ component is a constant map. 
The entire second component, which is of degree $\alpha$, passes through the $r-3$ points, plus 
an additional two more points 
coming from $\textnormal{ev}_3^*(L^2)$ and $\textnormal{ev}_4^*(L^2)$ and one more point, that 
comes from intersecting 
$\textnormal{ev}_1^*(L)$ and $\textnormal{ev}_2^*(L)$. Hence, the cardinality of that set 
is $N_{\alpha}(r, \theta)$. This gives us the right hand side of equation \eqref{1234}.

Let us now justify equation \eqref{1324}. Note that 
the left hand side of equation \eqref{1324} can be represented by the following picture:
\begin{figure}[H]
\vspace*{0.2cm}
\begin{center}\includegraphics[scale = .7]{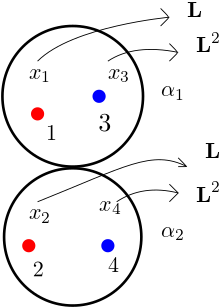}\vspace*{-0.2cm}\end{center}
\end{figure}
The argument is now same as before. We note that the configuration $\alpha_1\,=\, 0,\, 
\alpha_2\,=\, \alpha$ and $\alpha_1\,=\, \alpha, \, \alpha_2\,= \,0$ will not occur, since
a generic line and a generic point cannot intersect at a fiber. This concludes the proof of the recursive formula.
\end{proof}

Finally let us prove Proposition \ref{base_case_prpn} which establishes the base 
cases for the recursion.

\begin{proof}[Proof of Proposition \ref{base_case_prpn}]
First note that equation \eqref{base_case_rec1_pt5} is simply 
rewriting equation \eqref{dim_condition} again.

Next, let us justify equation \eqref{base_case_rec1_pt1}. Consider the cycle 
$\pi_F^*(a^{2})$ inside $\overline{M}_{0,r}^{\textnormal{Fib}}(\mathfrak{X}, \alpha)$.
This cycle is represented by curves mapping into $\mathbb{CP}^2_q$ (representing the homology class $dL_q-mE_q$) 
for some fixed $q$. 
The point $q$ will be representing the class $a^2$. 
Hence, $N_{\alpha}(r,2)$, which is given by 
\[ \big[\overline{M}_{0,r}^{\textnormal{Fib}}(\mathfrak{X},
\,\alpha)\big] \cdot \textnormal{ev}_1^*(L^2) \ldots 
\textnormal{ev}_r^*(L^2)\cdot \pi_F^*(a^{2}), \]
is the number of curves in $\mathbb{CP}^2_q$ representing the class $dL_q-mE_q$ and passing through 
$r$ generic points. That number is precisely $n_{\alpha}^{\mathbb{CP}^2_1}(r)$.

Equation \eqref{base_case_rec1_pt4} follows from the fact that $\pi_F^*(a^\theta)$ is the
zero class if $\theta\,\geq\, 3$ 
(since $a$ is the hyperplane class of $\mathbb{CP}^2_{\textnormal{Base}}$). 
Equation \eqref{base_case_rec1_pt2} follows from the fact that on $\mathbb{CP}^2_1$, 
there are no curves in the class $dL-mE$ if $d$ is negative (regardless of the value of $m$). 

Equation \eqref{base_case_rec1_pt3} will require a bit more effort to justify. 
First note that on $\mathbb{CP}^2_1$, 
there are no irreducible curves in the class $dL-mE$ if $d$ is positive and $m$ is negative. If $d$ is
negative, then from equation \eqref{base_case_rec1_pt2} 
we conclude that $N_{\alpha}(r, \theta)$
is zero. If $d\,=\,0$, then $N_{\alpha}(r, \theta)$ 
can potentially be nonzero. The cases $\theta\,=\,2$ and $\theta\,\geq\, 3$ are dealt by equations
\eqref{base_case_rec1_pt1} and \eqref{base_case_rec1_pt4} respectively. 
Hence, what remains is to consider the cases $\theta
\,=\,1$ or $0$. First, assume that $m\,=\,-1$. 
By equation \eqref{dim_condition}, in order for 
$N_{\alpha}(r, \theta)$ to be nonzero, we require
that $r+\theta\,=\,2$. If $\theta\,=\,1$ or $0$, then $r\,=\,1$ and $2$ respectively; in both cases
$r$ is nonzero. If $m\,\leq\,-2$, then by equation \eqref{dim_condition}, in order for 
$N_{\alpha}(r, \theta)$ to be nonzero, $r$ has to be nonzero (since $\theta\,\leq\, 2$). 
Hence, in all the cases, $r$ is nonzero, which is all that matters for the 
argument we are about to make. 
 
Let us now interpret the number
$N_{(d,m)}(r,\theta)$ geometrically, where $d\,=\,0$, $m\,\leq\,-1$ 
and $r$ is nonzero. The number $N_{(0,m)}(r,\theta)$ is given by 
\[ \big[\overline{M}_{0,r}^{\textnormal{Fib}}(\mathfrak{X},
\,(0,m))\big] \cdot \textnormal{ev}_1^*(L^2) \ldots 
\textnormal{ev}_r^*(L^2)\cdot \pi_F^*(a^{\theta}). \]
We claim that the above number is zero.

First, consider the cycle 
\[ \big[\overline{M}_{0,r}^{\textnormal{Fib}}(\mathfrak{X},
\,(0,m))\big] \]
This cycle denotes the space of rational curves into $\mathfrak{X}$, whose image lies in a $\mathbb{CP}^2_q$ and which represents the 
class $-mE_q$. Hence,
\[ \big[\overline{M}_{0,r}^{\textnormal{Fib}}(\mathfrak{X},
\,(0,m))\big] \cdot \textnormal{ev}_1^*(L^2) \ldots 
\textnormal{ev}_r^*(L^2)\cdot \pi_F^*(a^{\theta}), \]
denotes the number of rational curves into $\mathfrak{X}$, whose image lies in a $\mathbb{CP}^2_q$, 
representing the class $-mE_q$ and which passes through $r$ generic points in $\mathbb{CP}^2_q$. If 
$r\,>\,0$, then this number is clearly zero, because the image of the rational curve in every 
$\mathbb{CP}^2_q$ will precisely be the exceptional divisor and hence it will not intersect any other 
generic point (which will lie outside the exceptional divisor since the points are generic). Hence, we 
conclude that $N_{(d,m)}(r,\theta)$ is zero when $d\,=\,0$, $m\,\leq\,-1$ and $r$ is non-zero, which 
proves equation \eqref{base_case_rec1_pt3}.

Finally, equation \eqref{base_case_rec1_pt6} is obtained by using a geometrically 
non-trivial fact. Recall that a degree $d$ curve in $\cp^2$ has arithmetic genus $g_d\,=\, 
\frac{(d-1)(d-2)}{2}$. If the curve has an ordinary $m$-fold point then the geometric genus
of the curve is given by
\begin{equation*}
g_d - \frac{m(m-1)}{2} - \epsilon
\end{equation*}
where $\epsilon$ is a non-negative integer depending on the other singularities of the curve.
Since the geometric genus is of course a non-negative integer we must have $m \,\leq\, d-1$. Hence, 
in $\mathbb{CP}^2_1$, there are no curves in the class $dL-mE$, if $m \,\geq\, d$. Of course, 
for this reasoning, we are using the correspondence principle that is proved in Section \ref{correspondence}.

Let us now justify the remaining equations. 
The next three equations (namely equations \eqref{base_case_rec2_pt3}, \eqref{base_case_rec2_pt4} and \eqref{base_case_rec2_pt5}) are true, 
because there are no lines 
through $4$ points, $3$ points or $3$ points respectively.

Equation \eqref{base_case_rec2_pt6} (namely $N_{l-e}(2,1)\,=\,1$) is true 
because $N_{l-e}(2,1)$ represents 
the number of lines through two points (this corresponds to the case $r\,=\,2$), 
together with a choice of a point lying in the intersection of this line with the generic
line (corresponding to $\theta\,=\,1$).
Equation \eqref{base_case_rec2_pt7} (namely $N_{l-me}(r,\theta)\,=\,0$ for all $m \,\geq\, 2$) is true
because there are no lines with an $m$-fold point if $m \,\geq\, 2$ 
(again, we are using the correspondence principle).

This concludes the proof of all the base cases of the recursion.
\end{proof}

\subsection{A few remarks regarding intersection theory} 

We will now discuss a few points about intersection theory 
(including the fact as to why the moduli space that is being considered is irreducible). 

Let us start by making a few observations.
Recall that there is a fiber bundle $\pi\,:\,\mathfrak{X}\,\longrightarrow\, \mathbb{P}^2_{\textnormal{Base}}$, 
whose fiber over each point $q$ is $\mathbb{P}^2_q$ (the blow up of $\mathbb{P}^2$ at $q$). Now consider 
a sequence of holomorphic maps $u_n\,:\,\mathbb{P}^1 \,\longrightarrow\, \mathfrak{X}$ such that the image 
of $u_n$ lies in the fiber $\mathbb{P}^2_{q_n}$. Assume that $u_n$ converges to the bubble map $v$. 
We claim that the image of this bubble map will lie in some $\mathbb{P}^2_q$. To see why this is so, assume that 
$q_n$ converges to $q$ (possibly after passing to a subsequence). We can make this assumption
because $\mathbb{P}^2_{\textnormal{Base}}$ 
is compact. We claim that the image of the bubble map $v$ will lie in $\mathbb{P}^2_q$. Let us see why this is so. 
First let us make a simple observation. Suppose $w\,:\,\mathbb{P}^1 \,\longrightarrow\, \mathfrak{X}$ is
a continuous 
map such that the image of $w$ outside a finite set of points $\{y_1,\, \cdots,\, y_k\}$ lies inside a fiber 
$\mathbb{P}^2_q$. Then the image of $\{y_1,\, \cdots,\, y_k\}$ also lies inside $\mathbb{P}^2_q$. This simply follows from the 
fact that the image of $w$ is connected. 

We can now prove the original assertion. Let $v_i\,:\,\mathbb{P}^1 \,\longrightarrow\, \mathfrak{X}$ be one of the 
components of the bubble map $v$. The fact that $u_n$ converges to $v$ means that after composing with a 
sequence of M\"obius transformations $\varphi_n\,:\,\mathbb{P}^1\,\longrightarrow \,\mathbb{P}^1$, the sequence 
$u_n\circ \varphi_n$ converges to $v_i$ uniformly on compact sets outside a finite set of points. This 
follows by directly from the definition of convergence of stable maps \cite[Definition 5.2.1, Page 
114]{McSa}. In particular, $u_n\circ \varphi_n$ converges to $v_i$ pointwise outside a finite set of 
points. But this means that the image of $v_i$ lies inside $\mathbb{P}^2_q$ except for a finite set of 
points. By connectedness, we conclude that the image of $v_i$ lies inside $\mathbb{P}^2_q$. Repeating this 
argument for all the other components of the bubble map tells us that the image of the bubble map $v$ lies 
inside $\mathbb{P}^2_q$.

Let us summarize what we just established. If $u_n$ is a sequence of holomorphic maps whose image lies inside a fiber, 
then the limit of $u_n$ (if it exists) also lies inside a single fiber.

We will now give an alternative (but equivalent) description of our moduli space
$\overline{M}_{0,r}^{\textnormal{Fib}}(\mathfrak{X}, \alpha)$, 
where $\alpha\,:=\, (d,\,m)$. First of all, consider the
natural inclusion map $i_q\,:\, \mathbb{P}^2_q \,\longrightarrow\, \mathfrak{X}$ and define 
\begin{align*} 
\beta_q&\,:= \,d L_q - mE_q \,\in\, H_2(\mathbb{P}^2_q) \ \ \ \textnormal{ and } \ \ \ 
\beta\,:=\, (d L - mE)\cdot a^2 \,\in \,H_2(\mathfrak{X}).
\end{align*} 
Now note that the pushforward of $\beta_q$ in $\mathfrak{X}$ is $\beta$, i.e., 
\begin{align*} 
i_{q_*}(\beta_q) &\, =\,\, \beta. 
\end{align*}
Consider the moduli space $\overline{M}_{0,r}(\mathfrak{X}, \beta)$ of stable maps into $\mathfrak{X}$,
representing the homology class $\beta$. Notice that the curves in this space do not necessary have
to lie inside some fiber $\mathbb{P}^2_q$. 

Let $M_{0,r}^{\textnormal{Fib}}(\mathfrak{X}, \alpha)$ denote the subspace of curves in 
$M_{0,r}(\mathfrak{X}, \beta)$ (the open part of the moduli space)
whose image lies inside some $\mathbb{P}^2_q$. Define 
$\overline{M}_{0,r}^{\textnormal{Fib}}(\mathfrak{X}, \alpha)$ to be the \textit{closure} of 
$M_{0,r}^{\textnormal{Fib}}(\mathfrak{X}, \alpha)$ inside 
$\overline{M}_{0,r}(\mathfrak{X}, \beta)$. 
Since the image of a curve that arises as a 
limit of curves in $M_{0,r}^{\textnormal{Fib}}(\mathfrak{X}, \alpha)$ still lies in a single fiber, 
it follows that $\overline{M}_{0,r}^{\textnormal{Fib}}(\mathfrak{X}, \alpha)$ 
is precisely the space of stable maps whose image lies inside some $\mathbb{P}^2_q$ and that represents 
the class $\beta_q$. This was precisely the original definition of
$\overline{M}_{0,r}^{\textnormal{Fib}}(\mathfrak{X}, \alpha)$. 

Since $\overline{M}_{0,r}^{\textnormal{Fib}}(\mathfrak{X}, \alpha)$ 
is the closure of a variety 
inside 
$\overline{M}_{0,r}(\mathfrak{X}, \beta)$, it defines a cycle. 
We now note that the intersection theory of $\overline{M}_{0,r}(\mathfrak{X}, \beta)$ is well developed 
including the idea of applying the WDVV recursion. This is developed both from the algebro-geometric approach 
(\cite{FuPa}) and from the symplectic geometric approach (\cite{McSa}).
Hence, the intersection numbers defined by equation \eqref{N_alpha_defn} 
and the left hand sides of equations \eqref{1234} and \eqref{1324} 
can be viewed as intersection of cycles inside the ambient space 
$\overline{M}_{0,r}(\mathfrak{X}, \beta)$ (where the intersection theory is well developed). 

One further remark. It was proved by Damiano Testa in \cite{D_Testa} that $\overline{M}_{0,r}(\mathbb{P}^2_q, dL_q-mE_q)$ is an 
irreducible variety of the expected dimension. Hence, 
$\overline{M}_{0,r}^{\textnormal{Fib}}(\mathfrak{X}, \alpha)$ is a fiber bundle over $\mathbb{P}^2_{\textnormal{Base}}$, 
where the fibers are irreducible and the base is also irreducible. Consequently, the total space is irreducible. 
Thus, the moduli space we are considering is irreducible. 

\section{A correspondence result}\label{correspondence}

We now prove the correspondence result. 

\begin{thm}
\label{correspondence_blowup_theorem}
Let $m$ be an integer greater than or equal to $3$. 
Let $N_{\alpha}(r,\,\theta)$ be as defined in equation \eqref{N_alpha_defn}. Then for 
\begin{align}
\alpha&\,=\, (d,\,m), \qquad r\,=\,3d+1-m\qquad \textnormal{and} \qquad \theta\,=\,0,\label{alpha_value_ag}
\end{align}
$N_{\alpha}(r, \,\theta)$ 
gives the number of rational degree $d$ curves in $\mathbb{CP}^2$, passing through $3d+1-m$ generic points and having 
an $m$ fold point singularity.
\end{thm}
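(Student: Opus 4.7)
The plan is to unpack $N_{\alpha}(r,0)$ geometrically as a count of stable maps on the fibration $\X \to \cp^2_{\textnormal{Base}}$, and then apply the Gathmann correspondence fiberwise to recover rational plane curves with an $m$-fold point.

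First I would interpret the constraints. Since $L$ is pulled back from the $\cp^2$ factor, the class $L^2$ is Poincar\'e dual to the preimage in $\X$ of a point of $\cp^2$. Hence, for a generic choice of points $p_1,\ldots,p_r \in \cp^2$, imposing $\ev_i^*(L^2)$ forces the $i$-th marked point of a stable map to lie over $p_i$ under the blow-down $\cp^2_q \to \cp^2$. With $\theta = 0$ and $r = 3d+1-m$, the dimension count from Section \ref{family_kontsevich} shows that the intersection is zero-dimensional, and $N_{\alpha}(r,0)$ enumerates pairs $(q,f)$, where $q \in \cp^2_{\textnormal{Base}}$ and $f : \cp^1 \to \cp^2_q$ is a stable map of class $dL_q - mE_q$ whose $r$ marked points map to $p_1, \ldots, p_r$.

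Next I would apply Gathmann's correspondence \cite{Gath_blow_up} fiberwise: for generic $q$ and for points away from $q$, strict transform under the blow-down $\cp^2_q \to \cp^2$ furnishes a bijection between irreducible rational curves in $\cp^2_q$ of class $dL - mE$ meeting the chosen points and irreducible rational degree $d$ curves in $\cp^2$ with an ordinary $m$-fold point at $q$ passing through the same points. This identifies each solution $(q,f)$ with a pair $(q,C)$, where $C \subset \cp^2$ is a rational degree $d$ curve with an ordinary $m$-fold point at $q$ and $C$ passes through $p_1,\ldots,p_r$. To finish, I would show that the map $(q,C) \mapsto C$ is a bijection onto the target enumerative set by a dimension count: a single ordinary $m$-fold point imposes codimension $m-2$ in the $(3d-1)$-dimensional Severi family of rational degree $d$ plane curves, so the locus of rational curves with at least two such points has dimension $3d+3-2m$, and cutting by the $3d+1-m$ point conditions leaves virtual dimension $2-m$, which is negative for $m \geq 3$. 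Hence, generically, each $C$ in the count carries exactly one ordinary $m$-fold point and determines $q$ uniquely.

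The main obstacle will be the transversality and compactness analysis, namely showing that for generic $p_i$: (i) no solution $(q,f)$ arises from a stable map with reducible domain or with a component mapping into the exceptional divisor $E_q$, so the count is supported on the open stratum on which the Gathmann correspondence applies; and (ii) each surviving solution contributes with multiplicity one. Ingredient (i) is a dimension count on each boundary stratum of $\mbarfib{r}(\X,\alpha)$, controlled by Testa's irreducibility theorem \cite{D_Testa} for the fiber. Ingredient (ii) follows from a standard Bertini-style genericity argument once one knows that the Gathmann correspondence is an isomorphism of moduli, rather than merely a set-theoretic bijection, along the open locus of curves whose $m$-fold point is ordinary. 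It is precisely the hypothesis $m \geq 3$ that ensures the singularity produced is ordinary for generic parameters and that curves with two $m$-fold points do not contribute, so this hypothesis enters both the correspondence step and the injectivity step of the argument.
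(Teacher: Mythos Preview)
Your outline is broadly correct and arrives at the same conclusion, but the technical core differs from the paper's argument. The paper does not black-box Gathmann's correspondence or appeal to generic transversality on boundary strata. Instead, its main work is an explicit intersection-theoretic computation showing that $N_\alpha(r,0)$ only sees stable maps meeting the exceptional divisor $E_q$ at $m$ \emph{distinct} points (so that blowing down yields an \emph{ordinary} $m$-fold point). Concretely, the paper adds $m$ extra marked points constrained to $E$, proves a divisor-axiom identity $[\mbarfib{r+m}]\cdot \prod \ev_i^*(E)\cdot \prod \ev_j^*(L^2) = m^m N_\alpha(r,0)$ via the forgetful map, and then shows by a codimension count with the diagonal class $\Delta(E)\subset \X\times_{\cp^2}\X$ that configurations where two of these marked points coincide contribute zero. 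This replaces your proposed boundary-stratum dimension counts and your appeal to Gathmann's result being a moduli isomorphism.

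What each approach buys: the paper's diagonal/divisor-axiom argument is self-contained and avoids any delicate analysis of degenerate strata or reliance on the fine structure of \cite{Gath_blow_up}; it reduces the ``ordinary'' condition to a single clean vanishing. Your approach is conceptually natural and has the virtue of making explicit the injectivity step $(q,C)\mapsto C$ via the two-$m$-fold-point dimension count, which the paper leaves implicit. On the other hand, your items (i) and (ii) are only sketched, and carrying them out rigorously---especially ruling out components mapping into $E_q$ and controlling multiplicities---would likely require work comparable to, or exceeding, the paper's direct computation.
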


\begin{proof}
As usual $\mathbb{CP}^2_q$ denotes the blow-up of $\mathbb{CP}^2$
at the point $q$, and $\pi_q\,:\, \mathbb{CP}^2_q \,
\longrightarrow\, \mathbb{CP}^2$ is the blow-up morphism. 
Let $L_q$ be a divisor corresponding to $\pi_q^* \mathcal{O}_{\mathbb{CP}^2}(1)$ and $E_q$ the
exceptional divisor of the blow-up. 

Let us first prove the correspondence result when the $m$-fold point is at a fixed $q \,\in\, \cp^2$.
If $C\,\subset\, \mathbb{CP}^2$ is a degree $d$ curve with an ordinary $m$-fold point at $q$,
then we denote by $\widetilde{C}$ the strict transform of $C$ in $\mathbb{CP}^2_q$. Note that 
\begin{equation*}
dL_q \,= \,\pi_q^* [C] \,=\, [\widetilde{C}] + m E_q \quad \Rightarrow \quad
[\widetilde{C}] \,=\, dL_q - mE_q.
\end{equation*}
The curve $\widetilde{C}$ intersects $E_q$ at $m$ distinct points transversally. On the other
hand, for any curve $C'\,\subset\, \mathbb{CP}^2_q$ with $[C'] \,=\, dL_q - mE_q$ that
intersects $E_q$ at $m$ distinct points, $C \,=\, \pi_q(C')$ is a degree $d$ curve in 
$\mathbb{CP}^2$ with an ordinary $m$-fold point at $q$.
Hence, the set of rational degree $d$ curves in $\cp^2$ with an $m$-fold point at 
$q$ and passing through $3d-1-m$ generic points is in bijection with the set of
rational curves in $\cp^2_q$ in the class $dL_q-mE_q$ passing through $3d-1-m$ generic points 
which intersect $E_q$ at $m$-distinct points. 

\begin{center}
\includegraphics[scale=1]{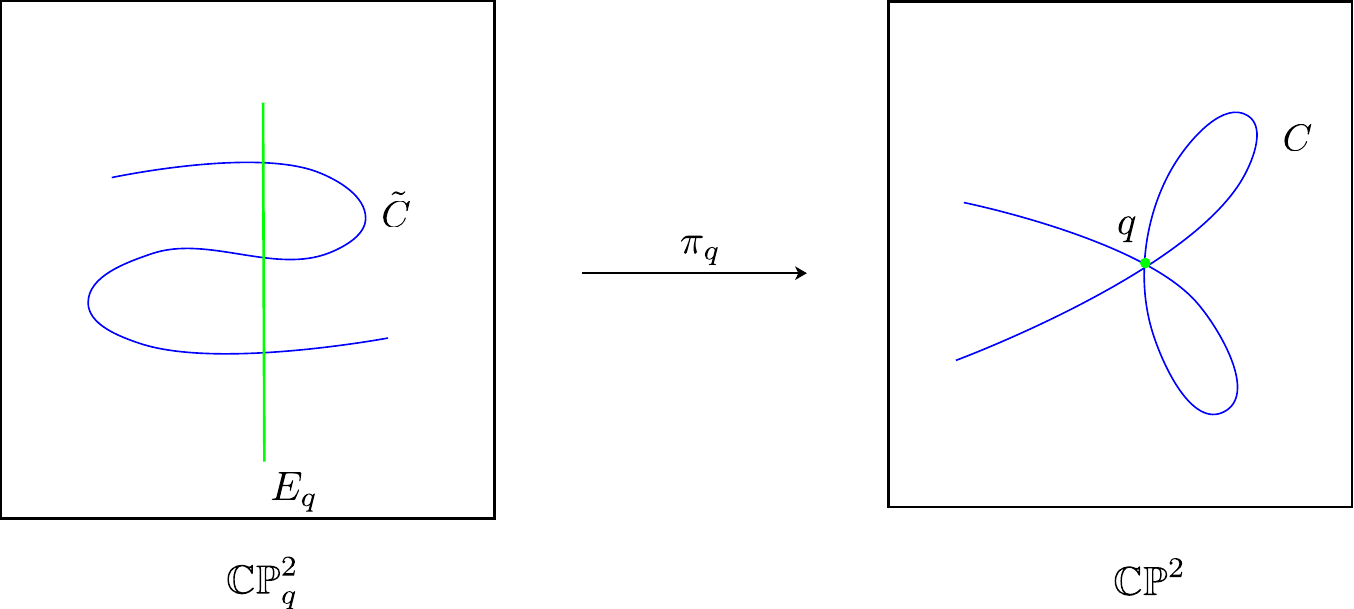}
\end{center}

Let $\mbar_{0,n}(\cp^2_q,\, \beta_q)$ denote the moduli space of genus zero stable maps 
to $\cp^2_q$ with $n$ marked points representing $\beta_q \,\in\, H_2(\cp^2_q,\, \mathbb{Z})$.
The Gromov-Witten invariants of $\cp^2_q$ are enumerative, hence for $\beta_q \,=\, dL_q - mE_q$
and $r \,=\, 3d-1-m$
\begin{equation*}
\gw{\cp^2_q,\, \beta_q}{0,r}{(L_q^2)^r} \,=\, 
\left[\mbar_{0, 3d-1-m}(\cp^2_q,\,\beta_q) \right] \cdot \left( \prod_{i=1}^{3d-1-m} 
\ev_i^*(L_q^2) \right) 
\end{equation*}
counts the number of genus $0$ curves in $\cp^2_q$ passing through $3d-1-m$ generic points 
representing class $dL_q-3E_q$. Of course these curves may not all intersect $E$ at 
$m$ distinct points. To ensure this we add $m$ marked points and require them to lie on
$E_q$; then this number is given by 
\begin{equation*}
\gw{\cp^2_q, \beta_q}{0,3d-1}{(E_q)^m,\, (L_q^2)^{3d-1-m})}\,=\,
\left[\mbar_{0, 3d-1}(\cp^2_q,\, \beta_q)\right] \cdot \left( \prod_{i=1}^m \ev_i^*(E_q) 
\right) \cdot \left( \prod_{i=m+1}^{3d-1} \ev_i^*(L_q^2) \right).
\end{equation*}
Now by the divisor axiom
\begin{equation*}
\gw{\cp^2_q, \beta_q}{0,3d-1}{(E_q)^m,\, (L_q^2)^{3d-1-m})} \,=\, 
m^m \gw{\cp^2_q, \beta_q}{0,r}{(L_q^2)^r}. 
\end{equation*}
We only want to count the curves which intersect $E_q$ at distinct points. 
Suppose the first two marked points are equal and both lie on $E$, we can count this number as 
follows: consider the diagonal map
$\Delta\,:\, \cp^2_q \,\longrightarrow\, \cp^2_q \times \cp^2_q$; then $\Delta(E_q)$ has
codimension $3$ in 
$\cp^2_q \times \cp^2_q$, and hence
\begin{equation*}
 \left[\mbar_{0, 3d-1}(\cp^2_q, \,\beta_q)\right] \cdot \Big((\ev_1\times\ev_2)^* \Delta(E_q) 
 \Big) \cdot
 \left( \prod_{i=3}^m \ev_i^*(E_q) \right) \cdot
 \left( \prod_{i=m+1}^{3d-1} \ev_i^*(L_q^2) \right)\,=\,0;
\end{equation*}
this is because $\dim \mbar_{0,3d-1}(\cp^2_q,\beta_q)\, =\, 6d-2-m$ whereas the cycle
$$  \Big((\ev_1\times\ev_2)^* \Delta(E_q) 
    \Big) \cdot
    \left( \prod_{i=3}^m \ev_i^*(E_q) \right) \cdot
    \left( \prod_{i=m+1}^{3d-1} \ev_i^*(L_q^2) \right)$$ 
has codimension $3 + (m-2) + 2(3d-1-m) \,=\, 6d - 1 - m$.
Hence, $\gw{\cp^2_q, \beta_q}{0,r}{(L_q^2)^r}$ in fact counts the curves which intersect 
$E_q$ at distinct points. This proves the result when the $m$-fold point is at a fixed point.

Now consider the general case. First we shall show that $N_\alpha(r,\,0)$ only counts the 
genus zero $r$ pointed stable maps into a fiber $\cp^2_q$ of $\X$ which intersect $E_q$ at
$m$ distinct points.

Let $r \,=\, 3d+1-m$ and $\alpha \,=\, (d,\,m) \,\in\, \mathbb{Z}^2$. 
It can be shown that
\begin{equation}\label{eq:divisor}
\left[\mbarfib{r+1}(\X,\alpha)\right]\cdot \prod_{i=1}^r \ev_i^*(L^2) \cdot \ev_{r+1}^*(E) \,=\, 
m \left[\mbarfib{r}(\X,\alpha)\right]\cdot \prod_{i=1}^r \ev_i^*(L^2) \,=\, m N_\alpha(r,\,0).
\end{equation} 
To prove this we proceed as in \cite[Section 7, Lemma 14]{FuPa}. Consider the 
forgetful morphism $$\pi_{r+1}\,:\, \mbarfib{r+1}(\X,\alpha) \,\longrightarrow\, 
\mbarfib{r}(\X,\alpha).$$ The restriction 
\begin{equation*}
\pi_{r+1} \,\,:\,\, \ev_{r+1}^{-1}(E)\,\,\longrightarrow\,\, \mbarfib{r}(\X,\alpha)
\end{equation*}
is a finite morphism with a generic fiber having cardinality $m$, hence 
$(\pi_{r+1})_* [\ev_{r+1}^{-1}(E)]\,=\, m \left[\mbarfib{r}(\X,\alpha)\right]$. 
Now $\eqref{eq:divisor}$ 
follows from
\begin{equation*}
\left[\mbarfib{r+1}(\X,\alpha)\right]\cdot \prod_{i=1}^r \ev_i^*(L^2) \cdot \ev_{r+1}^*(E) = 
[\ev_{r+1}^{-1}(E)] \cdot \prod_{i=1}^r \ev_i^*(L^2) =
(\pi_{r+1})_*[\ev_{r+1}^{-1}(E)] \cdot \prod_{i=1}^r \ev_i^*(L^2)
\end{equation*}
because $\ev_i\circ \pi_{r+1} \,=\, \ev_i$ for $i \,=\,1,\,\cdots,\,r$. Thus we see that 
\begin{equation*}
\left[\mbarfib{r+m}\right]\cdot \prod_{i=1}^m \ev_i^*(E) \cdot 
\prod_{i=m+1}^{r+m} \ev_{i}^*(L^2) \,=\, m^m N_\alpha(r,\,0).
\end{equation*}
This number counts the curves with last $m$ marked points on $E$. Now suppose the first two
of those marked points on $E$ coincide. We have a map 
$$\ev_1 \times \ev_2 \,:\, \mbarfib{r+m}(\X,\alpha)
\,\longrightarrow\, \X \times_{\cp^2} \X$$ and let 
$\Delta\,:\, \X \,\longrightarrow\, \X \times_{\cp^2} \X$ be the diagonal map. For some $\mu \,\in
\,\mbarfib{r+m}(\X,\alpha)$ if
$\ev_1(\mu)\, =\, \ev_2(\mu)\,\in\, E$, then $(\ev_1\times \ev_2)(\mu) \,\in\, \Delta(E)$. As before
$\Delta(E)$ has codimension three in $\X \times_{\cp^2} \X$. By dimension count we see that
\begin{equation*}
\left[\mbarfib{r+m}(\X,\alpha)\right] \cdot (\ev_1\times\ev_2)^*(\Delta(E))
\cdot \prod_{i=3}^m \ev_i^*(E) \cdot \prod_{i=m+1}^{r+m} \ev_{i}^*(L^2) \,=\, 0.
\end{equation*}
This is precisely the number of stable maps with first $m$ marked points on $E$ and such 
that the first two marked points coincide. We thus see that $N_\alpha(r,\,0)$ only counts
stable maps which intersect the exceptional divisor of any fiber at distinct points.
For every such curve after blowing down the exceptional divisor we get a curve with 
has an ordinary $m$-fold point.
\end{proof}

\section{Low degree checks}\label{low_deg_check}

\subsection{Rational curves with a choice of a node}

We now make some low degree checks. Let us start by recapitulating some numbers from 
Kontsevich's recursion formula. Denote by $n_d$ the number for rational curves of degree $d$ 
in $\mathbb{CP}^2$ that pass through $3d-1$ generic points. The first few values of $n_d$ 
obtained from the recursion formula are: 
\begin{center}
\begin{tabular}{|c|c|c|c|c|c|c|} 
\hline
$d$ &$3$ &$4$ & $5$ & $6$ & $7$ & $8$ \\
\hline
$n_{d}$ & $12$ & $620$ & $87304$ & $26312976$ & $14616808192$ & $13525751027392$ \\ 
\hline
\end{tabular}
\captionof{table}{\textnormal{~~}} \label{t_kontsevich}
\end{center}

We will now subject our formula to some low degree checks. Let $\alpha\,=\,(d,\,m)$.
For $\theta\,=\,0$, $m\,=\,2$ and $r\,=\,3d-1$, the values obtained are: 
\begin{center}
\begin{tabular}{|c|c|c|c|c|c|c|} 
\hline
$(d,m, \theta,r)$ &$(3,2,0,8)$ &$(4,2,0,11)$ & $(5,2,0,14)$ & $(6,2,0,17)$ & $(7,2,0,20)$ & $(8,2,0,23)$ \\
\hline
$N_{\alpha}(r, \theta)$ & $12$ & $1860$ & $523824$ & $263129760$ & $219252122880$ & $284040771575232$ \\ 
\hline
\end{tabular}
\captionof{table}{\textnormal{~~}} \label{t1}
\end{center}
The values for $N_{\alpha}(r, \,\theta)$ listed in Table \ref{t1}
are precisely equal to the number of rational degree $d$ curves in $\mathbb{CP}^2$ 
passing through $3d-1$ generic points together with a choice of a node. Since a rational degree $d$ curve passing through $3d-1$ 
generic points has 
precisely $\frac{(d-1)(d-2)}{2}$ nodal points, we expect the answer to be 
\[ \frac{(d-1)(d-2)}{2} n_d.\]
The numbers 
in Table \ref{t1} confirm this.

Next, set $\alpha\,=\,(d,\,m)$, $\theta\,=\,1$, $m\,=\,2$ and $r\,=\,3d-2$. 
In this case, note that 
$N_{\alpha}(r, \,\theta)$ is precisely equal to the number of rational curves of degree $d$
in $\mathbb{CP}^2$ passing through $3d-2$ generic points, with a nodal point lying on a line. 
The values obtained are: 
\begin{center}
\begin{tabular}{|c|c|c|c|c|c|c|} 
\hline
$(d,m, \theta,r)$ &$(3,2,1,7)$&$(4,2,1,10)$ & $(5,2,1,13)$ & $(6,2,1,16)$ & $(7,2,1,19)$ & $(8,2,1,22)$ \\
\hline
$N_{\alpha}(r, \theta)$ & $6$ & $768$ & $181320$ & $78076800$ & $56831124000$ & $65305557682176$ \\ 
\hline
\end{tabular}
\captionof{table}{\textnormal{~~}} \label{t2}
\end{center}
The numbers listed
in Table \ref{t2} 
agree with the ones obtained 
in \cite[p.~695, Lemma 7.5]{zin-triple}. To see this, note that
\cite[Lemma 7.5]{zin-triple} says that the desired number is equal to 
\begin{align*}
\frac{1}{2}[\overline{M}_{0,1}(\mathbb{CP}^2,\, d)]\cdot
\Big((2d-3)\textnormal{ev}^*(a^2)-\textnormal{ev}^*(a) \psi\Big)\cdot \mathcal{H}^{3d-2}, 
\end{align*}
where $a$ denotes the class of a line in $\mathbb{CP}^2$ and $\mathcal{H}$ denotes the divisor 
corresponding to the space of curves whose image passes through a generic point while $\psi$ denotes the 
first Chern class of the universal cotangent bundle. Using the property of $\psi$ as given by \cite[p.~96, 
Equation 5.5]{BVSRM} we get the numbers in the second table.

Next, we list the values of $N_{(d,m)}(r,\, \theta)$ when $\theta\,=\,2$, $m\,=\,2$ and $r\,=\,3d-3$: 
\begin{center}
\begin{tabular}{|c|c|c|c|c|c|c|} 
\hline
$(d,m, \theta,r)$ &$(3,2,2,6)$&$(4,2,2,9)$ & $(5,2,2,12)$ & $(6,2,2,15)$ & $(7,2,2,18)$ & $(8,2,2,21)$ \\
\hline
$N_{\alpha}(r, \theta)$ & $1$ & $96$ & $18132$ & $6506400$ & $4059366000$ & $4081597355136$ \\
\hline
\end{tabular}
\captionof{table}{\textnormal{~~}} \label{t3}
\end{center}
In Table \ref{t3}, $N_{\alpha}(r,\, \theta)$
is equal to the number of rational curves of degree $d$ in $\mathbb{CP}^2$ 
passing through $3d-3$ generic points, with a nodal point located at a fixed point. 
These numbers agree with the third line of the table displayed in \cite[p.~45, Example 8.1]{Gath_blow_up}. 
In principle, it should also be possible to modify \cite[p.~695, Lemma 7.5]{zin-triple}
and verify the numbers in Table \ref{t3}. 

\subsection{Rational curves with a triple point} 

Let us now consider the result of \cite{zin-triple}, where the author enumerates rational curves in $\mathbb{CP}^2$ 
with a triple point. We will obtain those numbers using the result of this paper. 

Set $\alpha\,=\,(d,\,m)$.
For $\theta\,=\,0$, $m\,=\,3$ and $r\,=\,3d-2$, the values we get for $N_{\alpha}(r, \theta)$ are: 
\begin{center}
\begin{tabular}{|c|c|c|c|c|c|} 
\hline
$(d,m, \theta,r)$ & $(4,3,0,10)$ & $(5,3,0,13)$ & $(6,3,0,16)$ & $(7,3,0,19)$ & $(8,3,0,22)$ \\
\hline
$N_{\alpha}(r, \theta)$ & $60$ & $56400$ & $49177440$ & $56784765120$ & $91466185097280$ \\ 
\hline
\end{tabular}
\captionof{table}{\textnormal{~~}} \label{t4}
\end{center}
The values listed for $N_{\alpha}(r, \,\theta)$ in Table \ref{t4} 
are precisely equal to the number of rational degree curves of $d$ in $\mathbb{CP}^2$ that pass through $3d-2$ 
generic points and have a triple point. 
The values listed in the Table \ref{t4} 
agree precisely with the values obtained 
in \cite{zin-triple} (displayed on Table $1$, Page $695$).

For the convenience of the reader, we list the values of $N_{\alpha}(r, \,\theta)$ when $\alpha\,=\,(d,\,m)$, 
$\theta\,=\,1$, $m\,=\,3$ and $r\,=\,3d-3$:
\begin{center}
\begin{tabular}{|c|c|c|c|c|c|} 
\hline
$(d,m, \theta,r)$ & $(4,3,1,9)$ & $(5,3,1,12)$ & $(6,3,1,15)$ & $(7,3,1,18)$ & $(8,3,1,21)$ \\
\hline
$N_{\alpha}(r, \theta)$ & $12$ & $9600$ & $7221096$ & $7307731200$ & $10461017642880$ \\ 
\hline
\end{tabular}
\captionof{table}{\textnormal{~~}} \label{t5}
\end{center}
We also list the values of 
$N_{\alpha}(r, \theta)$ when $\alpha\,=\,(d,\,m)$, 
$\theta\,=\,2$, $m\,=\,3$ and $r\,=\,3d-4$:
\begin{center}
\begin{tabular}{|c|c|c|c|c|c|} 
\hline
$(d,m, \theta,r)$ & $(4,3,2,8)$ & $(5,3,2,11)$ & $(6,3,2,14)$ & $(7,3,2,17)$ & $(8,3,2,20)$ \\
\hline
$N_{\alpha}(r, \theta)$ & $1$ & $640$ & $401172$ & $347987200$ & $435875735120$ \\ 
\hline
\end{tabular}
\captionof{table}{\textnormal{~~}} \label{t6}
\end{center}
The number $N_{(d,m)}(r, \,\theta)$ when 
$\theta\,=\,1$, $m\,=\,3$ and $r\,=\,3d-3$ gives us the number of rational curves of degree $d$
in $\mathbb{CP}^2$ that pass through $3d-3$ generic points and have a triple point lying on a line. Similarly, 
$N_{(d,m)}(r,\,\theta)$ when $\theta\,=\,2$, $m\,=\,3$ and $r\,=\,3d-4$ gives us the number of rational
curves of degree $d$ in $\mathbb{CP}^2$ 
that pass through $3d-4$ generic points and have a triple point lying on a point.
The numbers given in Table \ref{t6} agree
with the fourth line of the table displayed in \cite[p.~45, Example 8.1]{Gath_blow_up}.
In principle, it should be possible to modify the result of \cite{zin-triple}
and obtain the numbers given in Tables \ref{t5} and \ref{t6}. 

\subsection{Rational curves with an $m$-fold point}

Again, we set $\alpha\,=\,(d,\,m)$. 
For $\theta\,=\,0$, $m\,=\,d-1$ and $r\,=\,2d+2$; the values we get are
\begin{center}
\begin{tabular}{|c|c|c|c|c|c|c|} 
\hline
$(d,m, \theta, r)$ & $(4,3,0,10)$ &$(5,4,0, 12)$ & $(6,5,0,14)$ & $(7,6,0,16)$ & $(8,7,0,18)$ & $(9,8,0,20)$ \\
\hline
$N_{\alpha}(r, \theta)$ & $60$ &$180$ & $420$ & $840$ & $1512$ & $2520$ \\ 
\hline
\end{tabular}
\end{center}
Similarly, for $\theta\,=1$, $m\,=\,d-1$ and $r\,=\,2d+1$, the values we get are
\begin{center}
\begin{tabular}{|c|c|c|c|c|c|c|} 
\hline
$(d,m, \theta, r)$ & $(4,3,1,9)$ &$(5,4,1, 11)$ & $(6,5,1,13)$ & $(7,6,1,15)$ & $(8,7,1,17)$ & $(9,8,1,19)$ \\
\hline
$N_{\alpha}(r, \theta)$ & $12$ &$20$ & $30$ & $42$ & $56$ & $72$ \\ 
\hline
\end{tabular}
\end{center} 
Finally, for $\theta\,=\,2$, $m\,=\,d-1$ and $r\,=\,2d$, the values are
\begin{center}
\begin{tabular}{|c|c|c|c|c|c|c|} 
\hline
$(d,m, \theta, r)$ & $(4,3,2,8)$ &$(5,4,2, 10)$ & $(6,5,2,12)$ & $(7,6,2,14)$ & $(8,7,2,16)$ & $(9,8,2,18)$ \\
\hline
$N_{\alpha}(r, \theta)$ & $1$ &$1$ & $1$ & $1$ & $1$ & $1$ \\ 
\hline
\end{tabular}
\end{center}
These values are as expected. To see why, note that the numbers displayed in the above table 
are the number of rational curves of degree $d$ passing through $2(d+1)-\theta$ generic points that 
have a $(d-1)$ fold singular point lying on a cycle $a^{\theta}$. Note that a
smooth degree $d$-curve in $\mathbb{CP}^2$ 
has genus $\frac{(d-1)(d-2)}{2}$. Furthermore, we note that an $m$-fold singular point 
contributes $\frac{m(m-1)}{2}$ to the genus. Consequently, a $(d-1)$-fold singular point contributes 
$\frac{(d-1)(d-2)}{2}$ to the genus. Thus $N_{d, d-1}(2d+2, \,\theta)$ can also be interpreted as the 
number of curves in the linear system $H^0(\mathcal{O}_{\mathbb{CP}^2}(d))$
passing through 
$2(d+1)-\theta$ generic points and having a $(d-1)$-fold singular point lying on the cycle $a^{\theta}$. 
For example, when $d\,=\,4$, then $N_{4, 3}(10,\,0)$
is the number of quartics through $10$-generic points, having a triple point. Let us see how to solve this question.

Let $\mathcal{D} \,:=\, \mathbb{P}(H^0(\mathcal{O}_{\mathbb{CP}^2}(d)))$ denote the space of all
homogeneous degree $d$ polynomials in three variables up to scaling. This is a projective space of dimension 
$\frac{d(d+3)}{2}$. We will interpret our desired numbers as intersection numbers inside the space 
$\mathcal{D}\times \mathbb{CP}^2$. Denote the hyperplane classes of $\mathcal{D}$ and 
$\mathbb{CP}^2$ by $y$ and $a$ respectively. Let $f \,\in\, H^0(\mathcal{O}(d))$ be a homogeneous degree 
$d$ polynomial. Given $m\,\geq\, 2$, define the cycle $S_m$ as
\begin{align*}
S_m &\,:=\, \{ ([f],\, q) \,\in\, \mathcal{D}\times \mathbb{CP}^2\,\mid\, f(q)
\,=\,0 , ~~\nabla f|_{q}\,=\,0,
~~\nabla^2 f|_q \,=\,0,\,\cdots ,\,\nabla^{m-1} f|_q \,=\,0 \}. 
\end{align*}
Note that $ S_m$ is a cycle inside $\mathcal{D}\times \mathbb{CP}^2$. We will now interpret the evaluation maps and the 
derivatives up to order $m-1$ as sections of an appropriate bundle. Note that the evaluation map 
\[ (f,\, q) \,\,\longrightarrow\,\, f(q) \]
induces a section of the line bundle 
\begin{align*}
\mathbb{V}_{0}&\,\,:=\, \,\gamma_{\mathcal{D}}^* \otimes (\gamma^*_{\mathbb{CP}^2})^{d},
\end{align*}
where $\mathcal{\gamma}_{\mathcal{D}}$ and $\mathcal{\gamma}_{\mathbb{CP}^2}$ 
denote the tautological line bundles over $\mathcal{D}$ and $\mathbb{CP}^2$ respectively while 
$\mathcal{\gamma}_{\mathcal{D}}^*$ and $\mathcal{\gamma}_{\mathbb{CP}^2}^*$ respectively denote their duals. 
Note that 
\[c_1(\gamma_{\mathbb{CP}^2}^*)\,=\,a \ \ \ \textnormal{ and }\ \ \ \qquad c_1(\gamma_{\mathcal{D}}^*)
\,=\, y. \] 
We now note that the first derivative 
\[ (f,\, q)\,\, \longrightarrow\,\, \nabla f|_q \]
induces a section of the rank two vector bundle 
\begin{align*}
\mathbb{V}_{1}&\,\,:=\,\, \gamma_{\mathcal{D}}^* \otimes T^*\mathbb{CP}^2\otimes
(\gamma_{\mathbb{CP}^2}^*)^d.
\end{align*}
Similarly, the second derivative induces a section of the rank three vector bundle 
\begin{align*}
\mathbb{V}_{2}&\,\,:=\,\, \gamma_{\mathcal{D}}^* \otimes 
\textnormal{Sym}^2(T^*\mathbb{CP}^2)\otimes (\gamma_{\mathbb{CP}^2}^*)^d.
\end{align*}
In general, the $k^{\textnormal{th}}$-derivative induces a section of the rank $(k+1)$ vector bundle given by 
\begin{align*}
\mathbb{V}_{k}&\,\,:=\,
\,\gamma_{\mathcal{D}}^* \otimes \textnormal{Sym}^k(T^*\mathbb{CP}^2)\otimes (\gamma_{\mathbb{CP}^2}^*)^d.
\end{align*}
Hence, as a cycle, 
\begin{align*}
[S_m]& \,\,= \,\,e(\mathbb{V}_0)\cdot e(\mathbb{V}_1) \cdot e(\mathbb{V}_2) \cdot \ldots \cdot e(\mathbb{V}_{m-1}),
\end{align*}
were $e$ denotes the Euler class. The Euler class of each of the
vector bundles can be computed via the splitting principle. 
Let us define
\begin{align*}
c_1&\,:=\, y+da, \qquad \alpha_1\,:=\, -3a \qquad \textnormal{and} \qquad \beta_2\,:=\, 3 a^2.
\end{align*}
Then, via the splitting principle, the 
Euler class of the first few $\mathbb{V}_k$ are explicitly given by: 
\begin{align*}
e(\mathbb{V}_0) & \,=\, c_1, \qquad e(\mathbb{V}_1) \,=\, c_1^2 +c_1 \alpha_1 + \beta_2, \ \ \  
e(\mathbb{V}_2) \,= \,c_1^3 +3 c_1^2 \alpha_1 + c_1(2\alpha_1^2 + 4\beta_2), \\ 
\qquad e(\mathbb{V}_3) & \,=\, c_1^4 +6 c_1^3 \alpha_1 + c_1^2(11\alpha_1^2 + 10\beta_2), \ \ \  
e(\mathbb{V}_4)  \,=\, c_1^5 +10 c_1^4 \alpha_1 + c_1^3(35\alpha_1^2 + 20\beta_2), \\ 
e(\mathbb{V}_5)  &\, =\, c_1^6 +15c_1^5 \alpha_1 + c_1^4(85\alpha_1^2 + 35\beta_2), \ \ \  
e(\mathbb{V}_6)\,=\, c_1^7 +21c_1^6 \alpha_1 + c_1^5(175\alpha_1^2 + 56\beta_2) \ \ \ \textnormal{and} \\ 
e(\mathbb{V}_7)  & \,=\, c_1^8 +28c_1^7 \alpha_1 + c_1^6(322\alpha_1^2 + 84\beta_2). 
\end{align*}
Given $d$ and $m$, define 
\begin{align*}
j &\,\,:=\,\, \frac{d(d+3)}{2} - \Big(\frac{m^2+m-4}{2}\Big). 
\end{align*}
The number of curves of degree $d$ passing through 
$j-\theta$ generic points, and having an $m$-fold singular point lying on the cycle $a^{\theta}$, is
given by the number 
\begin{align*}
[S_m]\cdot y^{j} \cdot a^{\theta}. 
\end{align*}
Performing this computation for $m\,=\,d-1$ and $d\,=\,4,\, 5,\, 6,\, \cdots,\, 9$ gives us
precisely the numbers obtained in the three tables. 

\section{Acknowledgement} 
We are grateful to the referee for giving us valuable feedback to improve our manuscript.

\bibliography{Degreed} 
\bibliographystyle{siam}

\end{document}